\documentclass[11pt]{amsart}
\usepackage[T1]{fontenc}
\usepackage{lmodern}
\usepackage[a4paper,margin=29mm]{geometry}
\usepackage{amsmath,amssymb,amsthm,mathtools}
\usepackage{microtype}
\usepackage[hidelinks]{hyperref}
\hypersetup{
  pdftitle={Finitistic injective dimension exceeding finitistic projective dimension for a commutative ring},
  pdfauthor={Liang Chen},
  pdfsubject={Big finitistic dimensions of a commutative trivial extension},
  pdfkeywords={finitistic dimension, injective dimension, Boolean ring, trivial extension}
}

\newtheorem{theorem}{Theorem}[section]
\newtheorem{proposition}[theorem]{Proposition}
\newtheorem{lemma}[theorem]{Lemma}
\newtheorem{corollary}[theorem]{Corollary}
\theoremstyle{definition}
\newtheorem{remark}[theorem]{Remark}

\DeclareMathOperator{\FPD}{FPD}
\DeclareMathOperator{\FID}{FID}
\DeclareMathOperator{\pd}{pd}
\DeclareMathOperator{\id}{id}
\DeclareMathOperator{\gldim}{gl.dim}
\DeclareMathOperator{\Hom}{Hom}
\DeclareMathOperator{\Ext}{Ext}
\DeclareMathOperator{\Tor}{Tor}
\DeclareMathOperator{\Ann}{Ann}
\DeclareMathOperator{\supp}{supp}
\DeclareMathOperator{\coker}{coker}
\newcommand{\FF}{\mathbb F_2}
\newcommand{\NN}{\mathbb N}
\newcommand{\cc}{\mathfrak c}
\newcommand{\eps}{\varepsilon}
\newcommand{\into}{\hookrightarrow}
\newcommand{\onto}{\twoheadrightarrow}
\numberwithin{equation}{section}
\allowdisplaybreaks[1]

\title[Finitistic dimensions of a commutative ring]{Finitistic injective dimension exceeding finitistic projective dimension for a commutative ring}
\author{Liang Chen}
\address{School of Mathematical Sciences, Capital Normal University, Beijing, China}
\email{2210501003@cnu.edu.cn}
\subjclass[2020]{Primary 13D05; Secondary 13D07}
\keywords{Finitistic dimension, injective dimension, trivial extension, Boolean ring, square-zero ideal}
\date{}

\begin{document}

\begin{abstract}
We construct a commutative ring whose big finitistic projective dimension
is $1$ and whose big finitistic injective dimension is $2$. 
The example contradicts the comparison suggested by Bass for commutative rings.
\end{abstract}

\maketitle

\section{Introduction}

For a ring $A$, its big finitistic projective and injective dimensions are
\[
 \FPD(A)=\sup\{\pd_A X:\pd_A X<\infty\},\qquad
 \FID(A)=\sup\{\id_A X:\id_A X<\infty\},
\]
where $X$ ranges over all $A$-modules. In his study of these invariants,
Bass suggested that
\begin{equation}\label{eq:bass-comparison}
 \FID(A)\leq\FPD(A)
\end{equation}
should hold for every commutative ring $A$ \cite[p.~487]{Bass1960}.
Bass proved this comparison for commutative Noetherian rings
\cite[Corollary~5.5]{Bass1962}. Chase's example, recorded in
\cite[Example~8.5, p.~487]{Bass1960}, is a noncommutative ring $\Lambda$
with $\operatorname{lFPD}(\Lambda)=0<\operatorname{rFID}(\Lambda)$.
In contrast, Bass gives a commutative ring $A$ with
$\FPD(A)>\FID(A)=0$ in \cite[Example~8.6, p.~488]{Bass1960}. We construct a commutative ring
violating \eqref{eq:bass-comparison}.

Here is the construction. Put $k=\FF$ and let
\begin{align*}
 B&=k[x_n:n\geq1]/(x_n^2-x_n:n\geq1),\\
 C&=k[y_n:n\geq1]/(y_n^2-y_n:n\geq1),\qquad S=B\otimes_k C.
\end{align*}
We use the same letters for the variables and their residue classes.
For $\alpha=(\alpha_n)\in\Omega:=\{0,1\}^{\NN}$, set
\begin{equation}\label{eq:construction}
 I_\alpha=(x_n-\alpha_n:n\geq1)S,\qquad
 T_\alpha=S/I_\alpha,\qquad
 E=\bigoplus_{\alpha\in\Omega}T_\alpha,\qquad R=S\ltimes E.
\end{equation}
Thus $R=S\oplus E$ as an abelian group, with multiplication
\begin{equation}\label{eq:ring-multiplication}
 (s,e)(s',e')=(ss',se'+es').
\end{equation}
In particular, $R$ is commutative, $E$ is a square-zero ideal of $R$,
and $R/E\cong S$.

\begin{theorem}\label{thm:main}
The ring $R$ in \eqref{eq:construction} satisfies
\[
 \FPD(R)=1<2=\FID(R).
\]
It has cardinality $\cc=2^{\aleph_0}$, is not coherent, and every
localization of $R$ at a prime ideal is isomorphic to
$k[\eps]/(\eps^2)$. There is an explicitly defined $R$-module $W$ of
cardinality $\cc$ such that $\id_R W=2$.
\end{theorem}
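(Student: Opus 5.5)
The plan is to treat $R$ throughout as a square-zero extension of the Boolean ring $S$ and to reduce every homological question to $S$. First I record the facts about $S$ that everything rests on. $S$ is a countable Boolean ring, hence von Neumann regular, so every $S$-module is flat. Every ideal of $S$ is countably generated, so $\gldim S\le 1$ (Kaplansky/Pierce). Each $I_\alpha$ is a non-finitely generated ideal, since it is generated by the infinitely many independent idempotents $x_n-\alpha_n$. Hence $T_\alpha\cong C$ is a cyclic $S$-module with $\pd_S T_\alpha=1$. In particular $E$ and $R=S\oplus E$ are flat $S$-modules.

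The easy parts of the theorem come first. The cardinality is immediate: $|\Omega|=\cc$ and each $T_\alpha$ is countable. For non-coherence, take $e=(0,1_{T_\alpha})$. Its annihilator is $I_\alpha\oplus E$, whose image in $R/E=S$ is the non-finitely generated ideal $I_\alpha$, so it is not finitely generated. For the localizations, note that $\operatorname{Spec}R=\operatorname{Spec}S=\Omega\times\Omega$ because $E$ is nil. At $\mathfrak p=(\alpha,\beta)$ the ring $S_\mathfrak p$ is $k$. Moreover $(T_{\alpha'})_\mathfrak p$ is $k$ if $\alpha'=\alpha$ and $0$ otherwise, so $E_\mathfrak p\cong k$ and $R_\mathfrak p\cong k\ltimes k=k[\eps]/(\eps^2)$.

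For $\FPD(R)=1$ I would argue as follows.

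1. Let $X$ have finite projective dimension. Then each $X_\mathfrak p$ has finite flat dimension over the self-injective artinian local ring $k[\eps]/(\eps^2)$, so $X_\mathfrak p$ is free. Hence $X$ is flat over $R$.
2. Choose an $S$-projective resolution $0\to Q_1\to Q_0\to X/EX\to 0$, which exists since $\gldim S\le1$. Lift $Q_0$ to an $R$-projective $P_0\to X$. This map is surjective by Nakayama for the square-zero ideal $E$: if $P_0+EX=X$, then $EX$ equals $E$ times the image of $P_0$, which lies in that image.
3. The kernel $K$ is flat, and $\Tor_1^R(S,X)=0$ gives $K/EK\cong Q_1$, which is projective over $S$.
4. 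Apply the standard lemma that a flat module over a ring, whose reduction modulo a nilpotent ideal is projective, is itself projective. It gives $K$ projective, so $\pd_R X\le1$.
5. Equality is attained by $X=R\otimes_S T_\alpha$. This module is $R$-flat because $T_\alpha$ is $S$-flat, it has finite projective dimension by steps 2–4, and $X/EX=T_\alpha$ is not $S$-projective, so $\pd_R X=1$.

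The injective side is the main obstacle, because injective dimension does not localize for non-Noetherian rings and the dual of step 4 is not automatic. The upper bound $\FID(R)\le2$ is meant to mirror steps 1–4 using coinduction $\Hom_S(R,-)$. This functor sends $S$-injectives to $R$-injectives, and since $R$ is $S$-flat it is exact. I would first prove the analogue of step 1: if $\id_R X<\infty$ then $\Ext^i_R(S,X)=0$ for $i>0$. I would try to get this through the character dual $X^+=\Hom_{\mathbb Z}(X,\mathbb Q/\mathbb Z)$ together with the localization argument above, keeping track of exactly where non-coherence prevents $\id$ and flat dimension from being exchanged. Then I would embed $X$ into a coinduced module $\Hom_S(R,J)$ with $J$ injective over $S$ and the socle ${}_E X=\Hom_R(S,X)$ controlled. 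An $S$-module has injective dimension at most $1$, and the cokernel again satisfies the Ext-vanishing. The expected loss of one degree, compared with the projective side, comes from $E$ not being finitely presented (non-coherence), which is where the bound $2$ should appear.

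For the lower bound I would take $W$ to be the cokernel of a coinduced embedding built from the uncountable family $\{T_\alpha\}_{\alpha\in\Omega}$, so that $|W|=\cc$. The point is that $\Ext^1_S(T_\alpha,\bigoplus_{\alpha'}T_{\alpha'})$ is nonzero precisely because the sum is indexed by $\Omega$ rather than a countable set. I would then exhibit a nonzero class in $\Ext^2_R(R/(I_\alpha\oplus E),W)$. Verifying that this class survives is the step I expect to require the most explicit computation with the idempotents $x_n-\alpha_n$.
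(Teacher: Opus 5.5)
Your structural claims and your proof of $\FPD(R)=1$ are fine. The latter takes a different route from the paper: you localize at the dual-number stalks to get flatness and then lift modulo the square-zero ideal $E$, whereas the paper runs a Tor recurrence using faithful flatness of $E$ over $S$. The gap is the entire injective side.

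\textbf{Upper bound.} Your proposed analogue of step 1, that $\id_R X<\infty$ forces $\Ext^i_R(S,X)=0$ for all $i>0$, is false. The paper's witness $W$ has $\id_R W=2$ but $\Ext^1_R(S,W)\cong P/L\neq0$. The reason is that $\Hom_R(E,W)$ is the full product $P=\prod_{\Omega}C$, while restriction from $\Hom_R(R,W)=W$ only reaches the countably supported families $L$. Your claim would in fact prove too much: with $\gldim S\le1$ and the d\'evissage $0\to EY\to Y\to Y/EY\to0$ it yields $\Ext^{\geq2}_R(-,X)=0$, i.e.\ $\FID(R)\le1$.

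The missing idea is the self-extension $0\to T_\alpha\xrightarrow{j_\alpha}A_\alpha\to T_\alpha\to0$, where $A_\alpha=R\otimes_ST_\alpha\cong C[\eps]/(\eps^2)$. Its inclusion $j_\alpha$ factors through the free module $R$ via $T_\alpha\to E\subseteq R$. Hence $j_\alpha^*$ vanishes on $\Ext^{\geq1}_R(A_\alpha,-)$, and the connecting maps $\Ext^q_R(T_\alpha,Z)\to\Ext^{q+1}_R(T_\alpha,Z)$ are injective for $q\ge1$ and every $Z$. When $\id_R Z<\infty$ this kills $\Ext^{\geq1}_R(T_\alpha,Z)$, hence $\Ext^{\geq1}_R(E,Z)$, but only $\Ext^{\geq2}_R(S,Z)$. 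That one-degree shift is where the bound $2$ comes from, and d\'evissage then gives $\id_R Z\le2$. No character-module duality is needed, and over this non-coherent ring it would not convert injective dimension into flat dimension anyway.

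\textbf{Lower bound.} Your lower-bound plan is self-defeating. Since $R/(I_\alpha\oplus E)\cong T_\alpha$, the same injections show that a nonzero class in $\Ext^2_R(T_\alpha,W)$ would propagate to every higher degree and prove $\id_R W=\infty$, not $2$. Every module of finite injective dimension satisfies $\Ext^{\geq1}_R(T_\alpha,-)=0$. The degree-two class must be detected by a different module, and this is what the $y$-variables are for; your plan makes no use of them.

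The paper takes $W=L\oplus L$ with $(s,e)(\ell,p)=(s\ell,sp+e\ell)$, where $L\subseteq P$ consists of the countably supported families. Countable support is exactly what allows patching along the countable partition $\{D_{\alpha,n}\}_n$ of $\Omega\setminus\{\alpha\}$. This gives $\Ext^{\geq1}_S(T_\alpha,L)=0$, hence $\Ext^{\geq1}_R(A_\alpha,W)=0$, then $\Ext^{\geq1}_R(T_\alpha,W)=0$, so $\id_R W\le2$. So the support condition is chosen to make such fiberwise Ext groups vanish, which is the opposite of the role you assign to $\Ext^1_S(T_\alpha,\bigoplus_{\alpha'}T_{\alpha'})\neq0$.

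The test module is $B=S/J$ with $J=\bigoplus_nSe_n$ and $e_n=(1-y_n)\prod_{j<n}y_j$, for which $\Ext^2_R(B,W)\cong\coker\bigl(P/L\to\prod_ne_n(P/L)\bigr)$. Take the element whose $n$-th component is the class of the constant family $(a_ne_n)_{\alpha\in\Omega}$, with $(a_n)$ alternating. It is not in the image: a preimage $z$ would satisfy $e_nz_\alpha=a_ne_n$ for all $n$ at some $\alpha$ outside a countable union of exceptional sets. That is impossible because $z_\alpha\in C$ involves only finitely many $y_j$.
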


The pair $(1,2)$ is the smallest possible finite pair in a strict
inequality $\FPD<\FID$: Bass also observed that $\FPD(A)=0$ implies
$\FID(A)=0$ for commutative $A$ \cite[p.~487]{Bass1960}.
The noncoherence in Theorem~\ref{thm:main} is established by an explicit
annihilator calculation; no conclusion for the class of coherent
commutative rings is asserted here.

Trivial extensions and their homological dimensions are treated
systematically by Fossum, Griffith, and Reiten \cite{FGR1975}; for
idealization in commutative algebra, see Anderson and Winders
\cite{AndersonWinders2009}. Our projective-dimension calculation is an
elementary proof of a faithfully flat case of the change-of-rings
results in \cite[Section~4.A]{FGR1975}. The upper bound $\FID(R)\leq2$ can also be obtained from Cowley's Corollary~5.4 \cite[p.~370]{Cowley1997},
as explained in Remark~\ref{rem:cowley}. Work on injective modules and finitistic injective
dimension over pullbacks includes \cite{FacchiniVamos1985,Xiong2026}.
The lower bound here comes from a specific module and an explicit
nonzero second Ext class.

The construction uses the two countable sets of variables for different
purposes. Evaluating the $x$-variables indexes the summands of $E$ and
provides the extensions used to control finite injective dimension.
The $y$-variables provide a countable orthogonal family that detects the
nonzero second Ext class. Countable support permits the required
extensions of maps in each $x$-fiber, while the uncountability of
$\Omega$ prevents a simultaneous lifting in the final calculation.

Unless explicitly stated otherwise, all rings are commutative
with identity, and all modules are unital. All finitistic dimensions are big dimensions. We work in
ZFC; no additional set-theoretic hypothesis is used. An $S$-module is
regarded as an $R$-module via $R\onto S$ when its $E$-action is declared
to be zero. Conversely, restriction of an $R$-module to $S$ uses
$s\mapsto(s,0)$. All Ext and Tor groups are taken over the indicated
ring.

\section{The Boolean base and the trivial extension}

The canonical tensor-product isomorphism identifies
\begin{equation}\label{eq:S-presentation}
 S\cong k[x_1,x_2,\ldots,y_1,y_2,\ldots]/
 (x_n^2-x_n,y_n^2-y_n:n\geq1).
\end{equation}
It sends $f(x)\otimes g(y)$ to $f(x)g(y)$; the inverse sends
$x_n$ to $x_n\otimes1$ and $y_n$ to $1\otimes y_n$.
Every polynomial involves finitely many variables. Reduction by the
displayed relations makes it linear in each variable separately.
On any fixed finite set of variables, evaluation at the binary points
identifies the resulting algebra with a finite product of copies of
$k$. In particular, every nonzero element has a nonzero binary
evaluation.

\begin{lemma}\label{lem:hereditary}
The ring $S$ is countable, absolutely flat, coherent, and hereditary.
For every $\alpha\in\Omega$,
\[
 \pd_S T_\alpha=1.
\]
Consequently $\gldim S=\FPD(S)=1$.
\end{lemma}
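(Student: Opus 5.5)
We first establish the basic structural properties of $S$, then compute $\pd_S T_\alpha$, and finally deduce the global and finitistic dimensions.

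\emph{Countability and absolute flatness.} Countability is immediate from \eqref{eq:S-presentation}: $S$ is a quotient of a polynomial ring over the finite field $k$ in countably many variables. Every element of $S$ satisfies $s^2=s$. Indeed, by the remark after \eqref{eq:S-presentation}, $s$ lies in a finite subalgebra isomorphic to a finite product of copies of $k=\FF$, and there squaring is the identity. So $S$ is a Boolean ring. For each $s$ we have $s=s^2\cdot s$, so $S$ is von Neumann regular, i.e.\ absolutely flat: every $S$-module is flat.

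\emph{Coherence.} A finitely generated ideal of a Boolean ring is principal, since $(e,f)=(e+f+ef)$. It is generated by an idempotent $e$, so it equals $eS$, a direct summand of $S$. Thus finitely generated ideals are projective, hence finitely presented, and $S$ is coherent (indeed semihereditary).

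\emph{Heredity.} Since $S$ is countable, every ideal $J$ is countably generated. Write $J=\bigcup_m J_m$ as an increasing union of finitely generated ideals $J_m=e_mS$ with idempotents $e_m$. Put $f_1=e_1$ and $f_m=e_m(1-e_{m-1})$. Then $J_m=J_{m-1}\oplus f_mS$, so
\[
J=\bigoplus_m f_mS
\]
is projective. Hence every ideal is projective and $\gldim S\le 1$.

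\emph{The modules $T_\alpha$.} Since $T_\alpha=S/I_\alpha$ and $I_\alpha$ is projective, we get $\pd_S T_\alpha\le1$. It remains to show that $T_\alpha$ is not projective. A cyclic projective module $S/I$ over a commutative ring has $I$ a direct summand, so $I=eS$ for an idempotent $e$. In particular $I$ would be finitely generated. Suppose $I_\alpha$ were finitely generated. Then it would be generated by finitely many elements $x_n-\alpha_n$ with $n\le N$. Now evaluate at a binary point of the variables involved: set $x_n=\alpha_n$ for $n\le N$ and $x_{N+1}=1+\alpha_{N+1}$. Every generator with $n\le N$ vanishes at this point, but $x_{N+1}-\alpha_{N+1}$ evaluates to $1$, which is nonzero. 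This contradicts the remark that membership in an ideal can be detected by such evaluations. To make this rigorous, note that the evaluation defines a ring homomorphism $S\to k$ that kills the finitely generated ideal but not $x_{N+1}-\alpha_{N+1}\in I_\alpha$. Hence $I_\alpha$ is not finitely generated, $T_\alpha$ is not projective, and $\pd_S T_\alpha=1$.

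\emph{Consequences.} We now have $\gldim S\le1$ together with a module of projective dimension $1$, so $\gldim S=1$. Since every $S$-module then has finite projective dimension, $\FPD(S)=\gldim S=1$.

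The main obstacle is heredity for a non-Noetherian ring. The key point is the countable generation of ideals, which lets us orthogonalize an increasing chain of idempotents into the direct sum decomposition above. The evaluation homomorphism used in the non-projectivity step is also needed to rule out finite generation of $I_\alpha$.
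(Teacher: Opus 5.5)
Your proof is correct and follows essentially the same route as the paper: $S$ is Boolean, its finitely generated ideals are idempotent-generated direct summands, orthogonalizing a countable generating set writes every ideal as $\bigoplus_m Sf_m$, and $T_\alpha$ is not projective because $I_\alpha$ is not finitely generated. The only variation is in that last step: you separate $x_{N+1}-\alpha_{N+1}$ from $(x_n-\alpha_n:n\le N)$ with a character $S\to k$, whereas the paper reads non-finite-generation off the decomposition $I_\alpha=\bigoplus_{n\ge1}Sf_{\alpha,n}$ into infinitely many nonzero summands, a decomposition it reuses in Lemma~\ref{lem:patching}.
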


\begin{proof}
The presentation \eqref{eq:S-presentation} shows that $S$ is countable.
Since its characteristic is $2$ and its generators are idempotent,
every element of $S$ is idempotent. Thus $S$ is a Boolean ring. Every
finitely generated ideal is generated by an idempotent and is a direct
summand of $S$. The usual ideal criterion for flatness shows that all
$S$-modules are flat; the same observation proves coherence.

To establish heredity for arbitrary modules, let $U$ be any ideal of
$S$. Enumerate generators as $U=(a_1,a_2,\ldots)$ and put
\[
 b_n=a_n\prod_{j<n}(1-a_j).
\]
These are pairwise orthogonal idempotents, and
$(a_1,\ldots,a_n)=(b_1,\ldots,b_n)$ for every $n$. Hence
\[
 U=\bigoplus_{n\geq1}Sb_n.
\]
Each summand is projective, so every ideal of $S$ is projective.
The characterization of hereditary rings by projectivity of all
ideals gives $\gldim S\leq1$.

For $I_\alpha$, the preceding construction gives
\begin{equation}\label{eq:orthogonal-x}
 I_\alpha=\bigoplus_{n\geq1}Sf_{\alpha,n},\qquad
 f_{\alpha,n}=(x_n-\alpha_n)
       \prod_{j<n}\bigl(1-(x_j-\alpha_j)\bigr).
\end{equation}
Evaluation at $x=\gamma$ sends $f_{\alpha,n}$ to one precisely when
$\gamma$ first differs from $\alpha$ at position $n$. Therefore every
$f_{\alpha,n}$ is nonzero. The direct sum in
\eqref{eq:orthogonal-x} is not finitely generated, so neither is
$I_\alpha$. If $T_\alpha=S/I_\alpha$ were projective, then $I_\alpha$
would be a direct summand of $S$, and thus a cyclic ideal generated
by an idempotent. This is impossible. The exact sequence
$0\to I_\alpha\to S\to T_\alpha\to0$ proves the assertion.
\end{proof}

For $\alpha\in\Omega$, evaluation in the $x$-variables defines
\begin{equation}\label{eq:rho}
 \rho_\alpha:S\onto C,\qquad s(x,y)\longmapsto s(\alpha,y).
\end{equation}
Its kernel is $I_\alpha$, so $T_\alpha\cong C$ as rings and as
$S$-modules, where $S$ acts on $C$ through $\rho_\alpha$.
We use this identification for coordinate calculations.
Let $\lambda_\alpha:T_\alpha\into E$ be the summand inclusion and
write $u_\alpha=\lambda_\alpha(1)$.

\begin{lemma}\label{lem:local}
The $S$-module $E$ is faithfully flat. Every prime of $R$ has the form
$\mathfrak p_{\alpha,\beta}=\mathfrak m_{\alpha,\beta}\oplus E$, where
\[
 \mathfrak m_{\alpha,\beta}
   =(x_n-\alpha_n,y_n-\beta_n:n\geq1)S,
 \qquad(\alpha,\beta)\in\Omega\times\Omega.
\]
Moreover
\[
 R_{\mathfrak p_{\alpha,\beta}}\cong k[\eps]/(\eps^2).
\]
\end{lemma}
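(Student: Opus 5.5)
Three claims are needed: faithful flatness of $E$ over $S$, the classification of primes of $R$, and the identification of the local rings. I will treat them in that order.

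\emph{Faithful flatness.} By Lemma~\ref{lem:hereditary} every $S$-module is flat, so $E$ is flat. For faithfulness it suffices to show $\mathfrak m E\neq E$ for every maximal ideal $\mathfrak m$ of $S$. First I identify these ideals. Since $S$ is Boolean, every prime is maximal with residue field $k$. A prime therefore corresponds to a ring map $S\to k$, which is determined by the values of the idempotents $x_n,y_n$. These values are an arbitrary pair $(\alpha,\beta)\in\Omega\times\Omega$: the evaluation map exists by the presentation \eqref{eq:S-presentation}, and its kernel is $\mathfrak m_{\alpha,\beta}$.

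Given $\mathfrak m=\mathfrak m_{\alpha,\beta}$, look at the summand $T_\alpha\cong C$. We have
\[
T_\alpha/\mathfrak m T_\alpha\cong C/(y_n-\beta_n)C\cong k\neq0,
\]
so $E\otimes_S S/\mathfrak m\neq0$ and $E$ is faithfully flat.

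\emph{Primes of $R$.} Since $E^2=0$, the ideal $E$ lies in every prime of $R$. Primes of $R$ therefore correspond to primes of $R/E\cong S$, namely the ideals $\mathfrak m_{\alpha,\beta}\oplus E$.

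\emph{Localization.} Let $\mathfrak p=\mathfrak p_{\alpha,\beta}$ and $\mathfrak m=\mathfrak m_{\alpha,\beta}$. Elements $(s,e)$ with $s\notin\mathfrak m$ are units modulo $E$, hence units in $R$ because $E$ is nilpotent. Localizing at $\mathfrak p$ is thus the same as inverting $S\setminus\mathfrak m$, which gives
\[
R_{\mathfrak p}\cong S_{\mathfrak m}\ltimes E_{\mathfrak m}.
\]
In a Boolean ring, $S_{\mathfrak m}=k$. Indeed, each $s\in\mathfrak m$ satisfies $s(1-s)=0$ with $1-s\notin\mathfrak m$, so $s/1=0$; hence $S_{\mathfrak m}=S/\mathfrak m=k$. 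It follows that $E_{\mathfrak m}=E/\mathfrak mE$.

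\emph{Main obstacle.} The remaining and hardest step is to show $E/\mathfrak mE\cong k$. We have
\[
E/\mathfrak mE=\bigoplus_{\gamma\in\Omega}T_\gamma/\mathfrak m T_\gamma.
\]
For $\gamma\neq\alpha$, choose $n$ with $\gamma_n\neq\alpha_n$. Then $x_n-\alpha_n\in\mathfrak m$ acts on $T_\gamma$ as $\gamma_n-\alpha_n=1$, so $T_\gamma=\mathfrak mT_\gamma$. For $\gamma=\alpha$ the quotient is $k$, as computed above. Hence $E_{\mathfrak m}\cong k$, generated by the image of $u_\alpha$.

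Finally $R_{\mathfrak p}\cong k\ltimes k\cong k[\eps]/(\eps^2)$, with $\eps$ the image of $(0,u_\alpha)$.
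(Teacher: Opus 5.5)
Your route is the paper's: identify the primes of $S$ as the $\mathfrak m_{\alpha,\beta}$, use $S_{\mathfrak m}=k$, and kill each summand $T_\gamma$ with $\gamma\neq\alpha$ by an element of $\mathfrak m$ acting as $1$. The paper instead uses $x_n-\gamma_n\notin\mathfrak m$ annihilating $T_\gamma$, which is the same element up to adding $1$. This yields $E_{\mathfrak m}\cong k$. Your criterion $E\neq\mathfrak mE$ for faithful flatness is a repackaging of the paper's observation that $E_{\mathfrak m}\cong S_{\mathfrak m}$ at every maximal ideal. These parts are fine.

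One sentence in the localization step is false as written: elements $(s,e)$ with $s\notin\mathfrak m$ are \emph{not} units modulo $E$. In the Boolean ring $R/E\cong S$ the only unit is $1$. For instance, $s=1-(x_1-\alpha_1)$ lies outside $\mathfrak m_{\alpha,\beta}$ but is an idempotent different from $1$. If your claim held, $\mathfrak p$ would be the unique maximal ideal of $R$, whereas the $\mathfrak p_{\alpha,\beta}$ are pairwise distinct maximal ideals.

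Your conclusion is still correct, but the units live in $R':=R_{S\setminus\mathfrak m}\cong S_{\mathfrak m}\ltimes E_{\mathfrak m}$, not in $R$. There the image of $(s,e)$ is $(s/1,0)(1,e/s)$. This is a product of a unit of $S_{\mathfrak m}$ and the element $(1,e/s)$, whose inverse is $(1,-e/s)$. Thus $R\to R'$ inverts all of $R\setminus\mathfrak p$, while $R\to R_{\mathfrak p}$ inverts every $(s,0)$ with $s\notin\mathfrak m$. The two universal properties then give $R_{\mathfrak p}\cong R'$. The paper simply quotes this standard formula for localizing a trivial extension. With that repair your proof is complete.
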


\begin{proof}
Every prime quotient of a Boolean ring is the field $k$. Thus the
maximal ideals displayed above are all the prime ideals of $S$, and
$S_{\mathfrak m_{\alpha,\beta}}=k$. If $\gamma\ne\alpha$, choose $n$
with $\gamma_n\ne\alpha_n$. Then $x_n-\gamma_n$ annihilates
$T_\gamma$ and lies outside $\mathfrak m_{\alpha,\beta}$. Hence
$(T_\gamma)_{\mathfrak m_{\alpha,\beta}}=0$, whereas
$(T_\alpha)_{\mathfrak m_{\alpha,\beta}}=k$.
Localization commutes with direct sums, even with the uncountable
direct sum defining $E$. It follows that
\begin{equation}\label{eq:E-local}
 E_{\mathfrak m_{\alpha,\beta}}\cong k
       \cong S_{\mathfrak m_{\alpha,\beta}}.
\end{equation}
Every $S$-module is flat by Lemma~\ref{lem:hereditary}. If
$E\otimes_S N=0$, equation \eqref{eq:E-local} gives $N_{\mathfrak m}=0$
at every maximal ideal of $S$, and therefore $N=0$. This proves
faithful flatness.

The square-zero ideal $E$ is contained in every prime of $R$.
Primes therefore correspond to those of $S$, and localization of
the trivial extension, together with \eqref{eq:E-local}, gives
$R_{\mathfrak p_{\alpha,\beta}}=k\ltimes k$.
\end{proof}

\begin{proposition}\label{prop:structure}
The ring $R$ has cardinality $\cc$ and is not coherent. Its nilradical
is $E$, its Krull dimension is $0$, and $\pd_R S=\infty$.
\end{proposition}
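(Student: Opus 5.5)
We prove the four assertions separately, in increasing order of difficulty: cardinality, nilradical and Krull dimension, projective dimension of $S$, and finally noncoherence.

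\emph{Cardinality.} Since $|S|=\aleph_0$ by Lemma~\ref{lem:hereditary}, each $T_\alpha\cong C$ is countable, and $|\Omega|=\cc$. Hence $|E|=\cc$ and $|R|=|S|\cdot|E|=\cc$.

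\emph{Nilradical and Krull dimension.} The ideal $E$ is square-zero, so it lies in the nilradical. Conversely, $R/E\cong S$ is Boolean and therefore reduced, so the nilradical is exactly $E$. By Lemma~\ref{lem:local} every prime of $R$ has the form $\mathfrak p_{\alpha,\beta}$, and each $\mathfrak p_{\alpha,\beta}$ is maximal because $R/\mathfrak p_{\alpha,\beta}\cong k$. Hence $\dim R=0$.

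\emph{The equality $\pd_R S=\infty$.} Use the exact sequence $0\to E\to R\to S\to0$ of $R$-modules, where $E$ is an $R$-module through $R\onto S$. Suppose $\pd_R S=d<\infty$. Localize at any prime: $R_{\mathfrak p}\cong k[\eps]/(\eps^2)$ and $S_{\mathfrak p}=k$, and projective resolutions localize. This gives $\pd_{k[\eps]/(\eps^2)}k\le d$. However, the residue field $k$ has infinite projective dimension over $k[\eps]/(\eps^2)$, since its minimal resolution is periodic with each syzygy $\cong k$. This contradiction shows $\pd_R S=\infty$.

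\emph{Noncoherence.} Here we exhibit an element of $R$ whose annihilator is not finitely generated. The natural candidate is $r=(0,u_\alpha)$. Its annihilator consists of the pairs $(s,e)$ with $(s,e)(0,u_\alpha)=(0,su_\alpha)=0$, that is,
\[
 \Ann_R(0,u_\alpha)=I_\alpha\oplus E .
\]
Now suppose this ideal were finitely generated, say by $(s_i,e_i)$ for $i=1,\dots,m$. Modulo $E$, the images $s_i$ would generate $I_\alpha$, since $E$ maps to $0$ in $S=R/E$. But $I_\alpha$ is not finitely generated by the proof of Lemma~\ref{lem:hereditary}, a contradiction.

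The one point that needs care is the direction of this argument: the image of $I_\alpha\oplus E$ in $R/E$ is $I_\alpha$, and an image of a finitely generated ideal is finitely generated. So the annihilator is not finitely generated, and since the cyclic ideal $R(0,u_\alpha)$ is finitely generated but not finitely presented, $R$ is not coherent. I expect no serious obstacle; the only subtle points are checking that the $S$-action on $u_\alpha$ factors through $\rho_\alpha$, so that $\Ann_S(u_\alpha)=I_\alpha$, and justifying the localization argument for projective dimension.
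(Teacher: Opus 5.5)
Your proposal is correct and follows essentially the same route as the paper. Both use a cardinality count over $\Omega$, the annihilator $\Ann_R(0,u_\alpha)=I_\alpha\oplus E$ (whose image $I_\alpha$ in $S$ is not finitely generated), reducedness of $R/E\cong S$ for the nilradical, Lemma~\ref{lem:local} for the Krull dimension, and localization to $k[\eps]/(\eps^2)$, where your periodic minimal resolution of $k$ is the paper's $\Tor$ computation in another form.
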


\begin{proof}
The set $\Omega$ has cardinality $\cc$ and each nonzero module
$T_\alpha$ is countable. Since elements of $E$ have finite support,
$|E|=|R|=\cc$.
The annihilator of the element $(0,u_\alpha)\in R$ is
\[
 \Ann_R(0,u_\alpha)=I_\alpha\oplus E.
\]
Its image under $R\onto S$ is the non-finitely-generated ideal
$I_\alpha$. Thus this annihilator is not finitely generated, and
the annihilator criterion for coherence shows that $R$ is not
coherent.

The ring $S$ is reduced, so $E^2=0$ and $R/E\cong S$ imply that $E$ is
the nilradical. Lemma~\ref{lem:local} gives the Krull dimension.
Finally, localizing $S$ at any prime of $R$ gives the residue field
$k$ over $k[\eps]/(\eps^2)$. The resolution with every differential
equal to multiplication by $\eps$ gives
$\Tor_n^{k[\eps]/(\eps^2)}(k,k)=k$ for all $n\geq0$.
Thus $S$ cannot have finite projective dimension over $R$.
\end{proof}

\section{Finitistic projective dimension}

We record the change-of-rings argument used for the projective
calculation. It is the faithfully flat case of the formulas in
\cite[Lemma~4.4, Theorem~4.9, and Corollary~4.10]{FGR1975}.
We give a module-theoretic proof to specify all the hypotheses.

\begin{proposition}\label{prop:ff-change}
Let $A$ be a commutative ring, let $H$ be a faithfully flat
$A$-module, and put $\Lambda=A\ltimes H$. For every $\Lambda$-module
$X$ of finite projective dimension,
\begin{equation}\label{eq:pd-reduction}
 \pd_\Lambda X=\pd_A(X/HX).
\end{equation}
Furthermore, for every $A$-module $Y$,
\begin{equation}\label{eq:pd-induction}
 \pd_\Lambda(\Lambda\otimes_A Y)=\pd_A Y.
\end{equation}
Consequently $\FPD(\Lambda)=\FPD(A)$.
\end{proposition}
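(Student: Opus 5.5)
I would prove \eqref{eq:pd-induction} first and then use it to get \eqref{eq:pd-reduction}. Write $\Lambda=A\ltimes H$ and $J=0\oplus H$, so that $\Lambda/J\cong A$ and $J\cong H$ as $A$-modules. As an $A$-module, $\Lambda\cong A\oplus H$ is flat. Hence $\Lambda\otimes_A-$ is exact and carries a projective $A$-resolution $P_\bullet\to Y$ to a projective $\Lambda$-resolution of $\Lambda\otimes_A Y$. This gives $\pd_\Lambda(\Lambda\otimes_AY)\leq\pd_AY$. For the reverse inequality, apply $A\otimes_\Lambda-$, that is, reduction modulo $J$. Since $A\otimes_\Lambda(\Lambda\otimes_AY)\cong Y$, it is enough to show $\Tor^\Lambda_i(A,\Lambda\otimes_AY)=0$ for $i>0$. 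Then a projective $\Lambda$-resolution of $\Lambda\otimes_AY$ reduces to a projective $A$-resolution of $Y$ of the same length. For the vanishing, resolve $\Lambda\otimes_AY$ by $\Lambda\otimes_AP_\bullet$. Applying $A\otimes_\Lambda-$ gives back $P_\bullet$, whose homology is $Y$ in degree $0$ only.

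For \eqref{eq:pd-reduction}, let $X$ have a finite projective resolution $Q_\bullet\to X$ over $\Lambda$ of length $d$. Set $\bar X=X/HX=A\otimes_\Lambda X$ and $\bar Q_\bullet=A\otimes_\Lambda Q_\bullet$, a complex of projective $A$-modules. The key claim is $\Tor_i^\Lambda(A,X)=0$ for all $i>0$. If this holds, $\bar Q_\bullet$ resolves $\bar X$, so $\pd_A\bar X\leq d$.

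To prove the claim, I would use the exact sequence $0\to J\to\Lambda\to A\to0$ with $J\cong H$, where $J$ carries the $\Lambda$-structure through $\Lambda\to A$. Write $H=A\otimes_A H$. Since $H$ is $A$-flat, the change-of-rings comparison gives $\Tor^\Lambda_i(H,X)\cong H\otimes_A\Tor^\Lambda_i(A,X)$; I would justify this by computing with $\bar Q_\bullet$ and using flatness of $H$ over $A$. Dimension shifting then gives $\Tor^\Lambda_{i+1}(A,X)\cong\Tor^\Lambda_i(J,X)\cong H\otimes_A\Tor^\Lambda_i(A,X)$ for $i\geq1$. The boundary degree needs a small check. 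Since $\pd_\Lambda X<\infty$, all Tor vanishes in high degrees, so $H\otimes_A\Tor^\Lambda_i(A,X)=0$ for large $i$. Faithful flatness of $H$ then forces $\Tor^\Lambda_i(A,X)=0$ for large $i$, and downward induction through the isomorphisms kills every $\Tor_i$ with $i\geq1$. \textbf{This step is the main obstacle}, because the sign and degree bookkeeping at $i=1$ must be handled carefully.

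It remains to show $\pd_\Lambda X\leq\pd_A\bar X=:m$. Take the $m$-th syzygy $K$ of $X$ in $Q_\bullet$. By Tor-vanishing, $\bar K$ is the $m$-th syzygy of $\bar X$ over $A$, hence projective. Also $\Tor_i^\Lambda(A,K)=0$ for $i>0$, and $\pd_\Lambda K<\infty$. I would then show that such a $K$ is projective. Lift $\bar K$ to the projective module $\Lambda\otimes_A\bar K$ with a map onto $K$, which is surjective by Nakayama for the nilpotent ideal $J$. The kernel $L$ satisfies $A\otimes_\Lambda L=0$, by Tor-vanishing, so $L=JL=J^2L=0$. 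Finally, $\FPD(\Lambda)\leq\FPD(A)$ follows from \eqref{eq:pd-reduction}, and $\FPD(\Lambda)\geq\FPD(A)$ follows from \eqref{eq:pd-induction}.
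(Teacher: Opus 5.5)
Your proof is correct. Its central step is the same as the paper's: the recurrence $\Tor^\Lambda_{i+1}(A,X)\cong H\otimes_A\Tor^\Lambda_i(A,X)$ for $i\ge1$, obtained from $0\to H\to\Lambda\to A\to0$ and $H\otimes_\Lambda Q_\bullet\cong H\otimes_A\bar Q_\bullet$, followed by downward induction using faithful flatness. The degree $i=1$ is not actually an obstacle. Since $\Tor^\Lambda_i(\Lambda,X)=0$ for every $i\ge1$, the isomorphism holds uniformly down to $i=1$. Also, vanishing for $i>\pd_\Lambda X$ is immediate and needs no faithful flatness.

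You diverge from the paper in proving $\pd_\Lambda X\le\pd_A(X/HX)$. The paper uses adjunction to get $\Ext^q_\Lambda(X,N)\cong\Ext^q_A(X/HX,N)$ for $H$-annihilated $N$. It then kills $\Ext^q_\Lambda(X,Z)$ for $q>d$ and arbitrary $Z$ by the d\'evissage $0\to HZ\to Z\to Z/HZ\to0$. You instead show that the $m$-th syzygy $K$ is isomorphic to $\Lambda\otimes_A\bar K$, using $\Tor^\Lambda_1(A,K)=0$ and Nakayama for the square-zero ideal. State explicitly that the map $\Lambda\otimes_A\bar K\to K$ is chosen to reduce to the identity of $\bar K$; this is possible because $\bar K$ is $A$-projective. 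It is injectivity modulo $H$, not just surjectivity, that gives $A\otimes_\Lambda L=0$.

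Your route avoids Ext and shows the extra fact that the syzygy is induced from $A$. The paper's route needs no lifting construction. For \eqref{eq:pd-induction}, you reduce an arbitrary $\Lambda$-resolution of $\Lambda\otimes_AY$ modulo $H$ and get $\pd_AY\le\pd_\Lambda(\Lambda\otimes_AY)$ directly. This handles $\pd_AY=\infty$ without the paper's appeal to \eqref{eq:pd-reduction}, and makes clear that only flatness of $H$ is used there.
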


\begin{proof}
Let $P_\bullet\to X$ be a projective $\Lambda$-resolution and put
$H_n=\Tor_n^\Lambda(A,X)$. Since the ideal $H$ annihilates itself,
there is an isomorphism of complexes
\[
 H\otimes_\Lambda P_\bullet
   \cong H\otimes_A(A\otimes_\Lambda P_\bullet).
\]
Flatness of $H$ over $A$ permits tensoring to commute with homology.
The long exact Tor sequence for $0\to H\to\Lambda\to A\to0$
therefore gives
\begin{equation}\label{eq:tor-recurrence}
 H_{n+1}\cong\Tor_n^\Lambda(H,X)\cong H\otimes_A H_n
       \qquad(n\geq1).
\end{equation}
When $\pd_\Lambda X<\infty$, the modules $H_n$ eventually vanish.
Faithful flatness and \eqref{eq:tor-recurrence} imply, successively
downwards, that $H_n=0$ for every $n\geq1$.

Set $Y=X/HX=A\otimes_\Lambda X$. The complex
$A\otimes_\Lambda P_\bullet$ is now a projective $A$-resolution of
$Y$. For any $A$-module $N$, inflated to $\Lambda$, adjunction gives
an isomorphism of Hom complexes and hence
\begin{equation}\label{eq:ext-reduction}
 \Ext_\Lambda^q(X,N)\cong\Ext_A^q(Y,N)
       \qquad(q\geq0).
\end{equation}
In particular $d=\pd_A Y\leq\pd_\Lambda X<\infty$.
For an arbitrary $\Lambda$-module $Z$, both ends of
\[
 0\longrightarrow HZ\longrightarrow Z\longrightarrow Z/HZ
   \longrightarrow0
\]
are annihilated by $H$. Equation \eqref{eq:ext-reduction} and the
associated long exact sequence show that $\Ext_\Lambda^q(X,Z)=0$
for every $q>d$. This proves \eqref{eq:pd-reduction}.

Since $\Lambda=A\oplus H$ is flat over $A$, induction of a
projective $A$-resolution of $Y$ is a projective $\Lambda$-resolution
of $\Lambda\otimes_A Y$. Reducing it modulo $H$ recovers the original
resolution. Thus the positive Tor groups against $A$ vanish, and
the preceding argument gives \eqref{eq:pd-induction} when
$\pd_A Y<\infty$. If the induced module had finite projective
dimension while $\pd_A Y=\infty$, \eqref{eq:pd-reduction} would be
a contradiction. This proves \eqref{eq:pd-induction} in general.
Taking suprema proves the last assertion.
\end{proof}

\begin{corollary}\label{cor:fpd}
For the ring $R$ in \eqref{eq:construction}, $\FPD(R)=1$.
For each $\alpha\in\Omega$, the cyclic module
\begin{equation}\label{eq:Aalpha}
 A_\alpha=R\otimes_S T_\alpha\cong R/RI_\alpha
\end{equation}
has projective dimension $1$ over $R$.
\end{corollary}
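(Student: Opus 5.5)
This is a direct application of Proposition~\ref{prop:ff-change} with $A=S$ and $H=E$. By Lemma~\ref{lem:local}, $E$ is a faithfully flat $S$-module, and by construction $R=S\ltimes E$. Hence Proposition~\ref{prop:ff-change} applies, and its last assertion gives $\FPD(R)=\FPD(S)$. Lemma~\ref{lem:hereditary} gives $\FPD(S)=1$, so $\FPD(R)=1$.

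For the modules $A_\alpha$, I would first justify the displayed isomorphism $R\otimes_S T_\alpha\cong R/RI_\alpha$. Tensor the exact sequence $0\to I_\alpha\to S\to T_\alpha\to0$ with $R$ over $S$. By right exactness, $R\otimes_S T_\alpha\cong R/\operatorname{im}(R\otimes_S I_\alpha\to R)$. That image is $RI_\alpha$, where $I_\alpha$ is viewed inside $R$ via $s\mapsto(s,0)$. Since $R=S\oplus E$ is flat over $S$, this map is in fact injective, though only its image matters here.

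Next apply \eqref{eq:pd-induction} with $Y=T_\alpha$. This gives $\pd_R A_\alpha=\pd_S T_\alpha$, and Lemma~\ref{lem:hereditary} shows $\pd_S T_\alpha=1$. This proves the claim.

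I do not expect a real obstacle. The only points to check are that the $S$-module structure on $E$ used in Lemma~\ref{lem:local} is the one entering the trivial extension, and that $RI_\alpha$ means the ideal generated by $(I_\alpha,0)$. The ideal $RI_\alpha$ equals $I_\alpha\oplus I_\alpha E$.
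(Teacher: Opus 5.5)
Your proposal is correct and follows the paper's proof, which simply applies Proposition~\ref{prop:ff-change} with $A=S$ and $H=E$, using Lemmas~\ref{lem:hereditary} and~\ref{lem:local}. Your right-exactness check of $R\otimes_S T_\alpha\cong R/RI_\alpha$ is a detail the paper leaves implicit.
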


\begin{proof}
Apply Proposition~\ref{prop:ff-change} to Lemmas~\ref{lem:hereditary}
and \ref{lem:local}.
\end{proof}

\section{An upper bound for finite injective dimensions}

We first describe a short exact sequence associated to each fiber.
For $\gamma\ne\alpha$, some $x_n-\alpha_n$ acts as one on
$T_\gamma$. Hence
\begin{equation}\label{eq:fiber-ideal}
 I_\alpha E=\bigoplus_{\gamma\ne\alpha}T_\gamma,
 \qquad E/I_\alpha E\cong T_\alpha.
\end{equation}
Since $RI_\alpha=I_\alpha\oplus I_\alpha E$, it follows that
\begin{equation}\label{eq:dual-number-fiber}
 A_\alpha\cong T_\alpha\ltimes T_\alpha
       \cong C[\eps]/(\eps^2).
\end{equation}
Explicitly, the quotient map $q_\alpha:R\onto A_\alpha$ becomes
\[
 q_\alpha(s,e)=\rho_\alpha(s)+e_\alpha\eps,
\]
where $e_\alpha$ denotes the $\alpha$-coordinate of $e$.
There is a short exact sequence of $R$-modules
\begin{equation}\label{eq:self-extension}
 0\longrightarrow T_\alpha\xrightarrow{j_\alpha}A_\alpha
   \xrightarrow{p_\alpha}T_\alpha\longrightarrow0,
 \qquad j_\alpha(c)=c\eps,\quad p_\alpha(c+d\eps)=c.
\end{equation}
Here $E$ acts trivially on the two copies of $T_\alpha$. Crucially,
$j_\alpha$ factors through the projective $R$-module $R$:
\begin{equation}\label{eq:j-factorization}
 T_\alpha\xrightarrow{\lambda_\alpha}E\into R
     \xrightarrow{q_\alpha}A_\alpha.
\end{equation}
Indeed, the composite sends $c$ to $c\eps$.

\begin{lemma}\label{lem:injective-test}
Suppose that an $R$-module $Z$ satisfies
\[
 \Ext_R^q(T_\alpha,Z)=0
       \quad\text{for every $\alpha\in\Omega$ and every $q\geq1$}.
\]
Then $\id_R Z\leq2$.
\end{lemma}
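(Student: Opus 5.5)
The plan is to show directly that $\Ext_R^q(M,Z)=0$ for every $R$-module $M$ and every $q\geq3$. The reduction runs in three steps: first to $M=S$, then to arbitrary $S$-modules using that $S$ is hereditary, and finally to arbitrary $R$-modules using that $E$ is square-zero. The hypothesis on the $T_\alpha$ enters only in the first step, through the sequence $0\to E\to R\to S\to0$.

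\emph{Step 1: the module $S$.} Apply $\Hom_R(-,Z)$ to the sequence of $R$-modules
\[
 0\longrightarrow E\longrightarrow R\longrightarrow S\longrightarrow0 .
\]
Since $R$ is projective, the long exact sequence gives $\Ext_R^q(S,Z)\cong\Ext_R^{q-1}(E,Z)$ for every $q\geq2$. As an $R$-module, $E=\bigoplus_{\alpha\in\Omega}T_\alpha$, and each $T_\alpha$ carries the trivial $E$-action. Indeed, $E\cdot E=0$ in $R$, so the $R$-structure on $E$ factors through $S$, matching the inflated $T_\alpha$. Since $\Ext$ turns direct sums in the first variable into products,
\[
 \Ext_R^{q-1}(E,Z)\cong\prod_{\alpha\in\Omega}\Ext_R^{q-1}(T_\alpha,Z)=0
 \qquad(q-1\geq1).
\]
Hence $\Ext_R^q(S,Z)=0$ for all $q\geq2$. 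The same then holds for every free $S$-module $S^{(I)}$, again by converting sums to products, and therefore for every projective $S$-module, since such a module is a direct summand of a free one and $\Ext$ is additive.

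\emph{Step 2: arbitrary $S$-modules.} Let $N$ be any $S$-module, regarded as an $R$-module. By Lemma~\ref{lem:hereditary}, $\gldim S\leq1$, so there is an exact sequence of $S$-modules $0\to P_1\to P_0\to N\to0$ with $P_0,P_1$ projective over $S$. This is also exact as a sequence of $R$-modules. The long exact sequence places $\Ext_R^q(N,Z)$ between $\Ext_R^{q-1}(P_1,Z)$ and $\Ext_R^q(P_0,Z)$. For $q\geq3$ both of these vanish by Step~1, since $q-1\geq2$. Thus $\Ext_R^q(N,Z)=0$ for all $q\geq3$.

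\emph{Step 3: arbitrary $R$-modules.} For any $R$-module $M$, consider $0\to EM\to M\to M/EM\to0$. Both end terms are annihilated by $E$, because $E^2=0$, so both are inflated $S$-modules. Step~2 and the long exact sequence then give $\Ext_R^q(M,Z)=0$ for $q\geq3$. In particular $\Ext_R^3(M,Z)=0$ for all $M$, which is equivalent to $\id_RZ\leq2$, for instance by dimension shifting along an injective resolution of $Z$ or by Baer's criterion applied to the second cosyzygy.

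I do not expect a serious obstacle. The one point needing care is Step~1: one must check that the $R$-module structure on $E$, viewed as an ideal of $R$, really is the inflated $S$-structure of $\bigoplus T_\alpha$, so that the hypothesis on the $T_\alpha$ applies. One must also use that $\Ext$ converts the uncountable direct sum into a product, which holds in ZFC with no further assumptions.
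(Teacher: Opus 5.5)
Your proposal is correct and follows the paper's proof essentially step for step. You use $0\to E\to R\to S\to0$ with direct sums turning into products to get $\Ext_R^q(S,Z)=0$ for $q\geq2$, extend this to projective $S$-modules, then use heredity of $S$ to get vanishing for inflated $S$-modules when $q\geq3$, and finish with $0\to EM\to M\to M/EM\to0$.
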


\begin{proof}
Ext takes direct sums in its first variable to products, so
$\Ext_R^q(E,Z)=0$ for $q\geq1$. From $0\to E\to R\to S\to0$,
we obtain
\begin{equation}\label{eq:S-high-ext}
 \Ext_R^q(S,Z)=0\qquad(q\geq2).
\end{equation}
The same holds with $S$ replaced by any projective $S$-module,
viewed as an $R$-module, since it is a direct summand of a direct sum
of copies of $S$. Every $S$-module $N$ has a projective
$S$-resolution of length at most one. Applying the long exact Ext
sequence to that resolution gives
\[
 \Ext_R^q(N,Z)=0\qquad(q\geq3).
\]
For any $R$-module $Y$, the modules $EY$ and $Y/EY$ are annihilated
by $E$ and are therefore inflated $S$-modules. The long exact
sequence associated to $0\to EY\to Y\to Y/EY\to0$ now gives
$\Ext_R^q(Y,Z)=0$ for $q\geq3$. Thus $\id_R Z\leq2$.
\end{proof}

\begin{proposition}\label{prop:fid-upper}
Every $R$-module of finite injective dimension has injective
dimension at most $2$. In particular, $\FID(R)\leq2$.
\end{proposition}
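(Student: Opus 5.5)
The plan is to verify the hypothesis of Lemma~\ref{lem:injective-test} for every $R$-module $Z$ with $\id_R Z<\infty$. Once $\Ext_R^q(T_\alpha,Z)=0$ holds for all $\alpha\in\Omega$ and all $q\geq1$, that lemma gives $\id_R Z\leq2$. Taking the supremum over all such $Z$ then yields $\FID(R)\leq2$.

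Fix $Z$ with $n=\id_R Z<\infty$ and fix $\alpha\in\Omega$. We have $\Ext_R^q(T_\alpha,Z)=0$ for all $q>n$. We prove vanishing for $q\geq1$ by descending induction on $q$. So let $q\geq1$ and assume $\Ext_R^{q+1}(T_\alpha,Z)=0$. Apply $\Hom_R(-,Z)$ to the self-extension \eqref{eq:self-extension}. This gives the exact segment
\[
 \Ext_R^q(A_\alpha,Z)\xrightarrow{\;j_\alpha^*\;}\Ext_R^q(T_\alpha,Z)
 \longrightarrow\Ext_R^{q+1}(T_\alpha,Z)=0 .
\]
Hence $j_\alpha^*$ is surjective in degree $q$.

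On the other hand, by \eqref{eq:j-factorization} the map $j_\alpha$ factors as the composite of two $R$-linear maps. The first is $T_\alpha\to R$, namely $\lambda_\alpha$ followed by the inclusion of the ideal $E$. This map is $R$-linear because $E$ annihilates both $T_\alpha$ and $E$. The second is $q_\alpha:R\to A_\alpha$. By functoriality, $j_\alpha^*$ factors through $\Ext_R^q(R,Z)$, which vanishes for $q\geq1$ because $R$ is projective. So $j_\alpha^*=0$ in degree $q$. Combined with surjectivity, this forces $\Ext_R^q(T_\alpha,Z)=0$, which completes the induction.

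The only point requiring care is that the factorization \eqref{eq:j-factorization} is one of $R$-modules and not merely of $S$-modules. This follows from the square-zero multiplication \eqref{eq:ring-multiplication} and from the explicit formula $q_\alpha(s,e)=\rho_\alpha(s)+e_\alpha\eps$. The rest is formal: the finiteness of $\id_R Z$ supplies the starting point of the descending induction. Without that finiteness, the connecting maps need not vanish, which is why the bound $2$ applies only to modules of finite injective dimension.
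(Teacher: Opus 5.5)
Your proof is correct and is essentially the paper's argument. The factorization \eqref{eq:j-factorization} through $R$ makes $j_\alpha^*$ vanish in positive degrees, so $\Ext_R^q(T_\alpha,Z)$ injects into $\Ext_R^{q+1}(T_\alpha,Z)$; finiteness of $\id_R Z$ then forces vanishing in all positive degrees, and Lemma~\ref{lem:injective-test} applies. Your descending induction is just a step-by-step rephrasing of those injections.
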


\begin{proof}
Let $\id_R Z<\infty$. The factorization
\eqref{eq:j-factorization} implies that the map
\[
 j_\alpha^*:\Ext_R^q(A_\alpha,Z)\longrightarrow
                      \Ext_R^q(T_\alpha,Z)
\]
is zero for $q\geq1$: it factors through $\Ext_R^q(R,Z)=0$.
The long exact sequence of \eqref{eq:self-extension} consequently
provides injections
\[
 \Ext_R^q(T_\alpha,Z)\into\Ext_R^{q+1}(T_\alpha,Z)
                 \qquad(q\geq1).
\]
Finite injective dimension makes these groups vanish in sufficiently
large degrees. The injections force them to vanish in every positive
degree. Lemma~\ref{lem:injective-test} applies.
\end{proof}

\begin{remark}\label{rem:cowley}
The preceding argument also proves that eventual vanishing of
$\Ext_R^q(S,Z)$, without assuming $\id_R Z<\infty$, implies its
vanishing for every $q\geq2$. Indeed,
\[
 \Ext_R^{q+1}(S,Z)\cong\prod_{\alpha\in\Omega}
                    \Ext_R^q(T_\alpha,Z)\qquad(q\geq1),
\]
and the same connecting injections apply. Thus $1$ is a vanishing
bound for $\Ext_R(S,-)$ in the terminology of
\cite[p.~362]{Cowley1997}. Since $E^2=0$, Cowley's
Corollary~5.4 \cite[p.~370]{Cowley1997} gives the alternative bound
$\FID(R)\leq\FID(S)+1\leq2$. The proof above establishes the bound
directly and also supplies the criterion needed for the witness below.
\end{remark}

\section{A module of injective dimension two}

\subsection{The module and its fiberwise Ext groups}

Using $T_\alpha\cong C$, define $S$-modules
\begin{equation}\label{eq:P-L}
 P=\prod_{\alpha\in\Omega}C,\qquad
 L=\{(c_\alpha)\in P:\supp(c_\alpha)\text{ is at most countable}\}.
\end{equation}
Here the support of a tuple $(c_\alpha)_{\alpha\in\Omega}$ is
$\{\alpha\in\Omega:c_\alpha\ne0\}$.
The action of $s\in S$ on the $\alpha$-coordinate is multiplication
by $\rho_\alpha(s)$. The direct sum $E$ embeds naturally in $L$,
and coordinatewise multiplication defines a map $E\times L\to E$.
Set
\begin{equation}\label{eq:W}
 W=L\oplus L,\qquad
 (s,e)(\ell,p)=(s\ell,sp+e\ell).
\end{equation}
This is an $R$-module: $e\ell$ has finite support, and composition
of two $E$-actions is zero. More explicitly, both sides of
$r(r'w)=(rr')w$ have second coordinate
$ss'p+se'\ell+s'e\ell$, by \eqref{eq:ring-multiplication}.

\begin{lemma}\label{lem:patching}
For every $\alpha\in\Omega$,
\[
 \Ext_S^q(T_\alpha,L)=0\qquad(q\geq1).
\]
Consequently $\Ext_R^q(A_\alpha,W)=0$ for every $q\geq1$.
\end{lemma}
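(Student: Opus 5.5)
Since $S$ is hereditary by Lemma~\ref{lem:hereditary}, $\Ext_S^q(T_\alpha,L)=0$ automatically for $q\geq2$. So the first claim reduces to $q=1$. For that case I apply $\Hom_S(-,L)$ to $0\to I_\alpha\to S\to T_\alpha\to0$. This shows that $\Ext_S^1(T_\alpha,L)$ is the cokernel of the restriction map $\Hom_S(S,L)\to\Hom_S(I_\alpha,L)$. It therefore suffices to show that every $S$-linear map $\varphi:I_\alpha\to L$ is multiplication by some $\ell\in L$.

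Use the orthogonal decomposition \eqref{eq:orthogonal-x}, $I_\alpha=\bigoplus_n Sf_{\alpha,n}$, with each $f_{\alpha,n}$ idempotent. Then $\varphi$ is determined by the elements $\ell_n=\varphi(f_{\alpha,n})$, and these satisfy $f_{\alpha,n}\ell_n=\ell_n$. Conversely, any family with this property defines a map $\varphi$. Now read off coordinates. On the $\gamma$-coordinate, $f_{\alpha,n}$ acts by $\rho_\gamma(f_{\alpha,n})$, which is $1$ exactly when $\gamma$ first differs from $\alpha$ at position $n$ and is $0$ otherwise. Put $U_n=\{\gamma:\gamma \text{ first differs from } \alpha \text{ at } n\}$. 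Then each $\ell_n$ is supported in $U_n$. The sets $U_n$ are pairwise disjoint, and their union is $\Omega\setminus\{\alpha\}$.

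Define $\ell\in P$ coordinatewise: its $\gamma$-coordinate is the $\gamma$-coordinate of $\ell_n$ for $\gamma\in U_n$, and its $\alpha$-coordinate is $0$. Its support is a countable union of countable sets, hence countable, so $\ell\in L$. This is exactly where the countable-support condition in \eqref{eq:P-L} is used; it is the only real point of the argument. By the coordinate description, $f_{\alpha,n}\ell=\ell_n$ for every $n$. Hence $\varphi$ is the restriction of $s\mapsto s\ell$, the restriction map is surjective, and $\Ext_S^1(T_\alpha,L)=0$.

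For the consequence, recall $A_\alpha=R\otimes_S T_\alpha$ from \eqref{eq:Aalpha}. Since $R=S\oplus E$ is flat over $S$ (Lemma~\ref{lem:hereditary}), a projective $S$-resolution $Q_\bullet\to T_\alpha$ induces a projective $R$-resolution $R\otimes_S Q_\bullet\to A_\alpha$, as in the proof of Proposition~\ref{prop:ff-change}. Adjunction gives $\Hom_R(R\otimes_S Q_\bullet,W)\cong\Hom_S(Q_\bullet,W)$, where $W$ is restricted to $S$ via $s\mapsto(s,0)$. By \eqref{eq:W}, this restriction is $L\oplus L$ with the diagonal $S$-action. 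Therefore
\[
 \Ext_R^q(A_\alpha,W)\cong\Ext_S^q(T_\alpha,L)^{\oplus2}=0\qquad(q\geq1).
\]
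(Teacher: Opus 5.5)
Your proposal is correct and follows the paper's proof essentially step by step. You use heredity to reduce to $q=1$, identify $\Ext_S^1(T_\alpha,L)$ as the cokernel of restriction along $I_\alpha\into S$ via the orthogonal idempotents $f_{\alpha,n}$, and patch over the partition of $\Omega\setminus\{\alpha\}$ using countable supports; you then pass to $R$ by flat base change and adjunction, with a helpful explicit note that $W$ restricts to $L\oplus L$ over $S$.
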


\begin{proof}
Apply $\Hom_S(-,L)$ to
$0\to I_\alpha\to S\to T_\alpha\to0$ and use
\eqref{eq:orthogonal-x}. The group $\Ext_S^1(T_\alpha,L)$ is the
cokernel of
\begin{equation}\label{eq:patching-map}
 L\longrightarrow\prod_{n\geq1}f_{\alpha,n}L,
       \qquad\ell\longmapsto(f_{\alpha,n}\ell)_n.
\end{equation}
For each $n$, the element $f_{\alpha,n}$ acts as the indicator of
\[
 D_{\alpha,n}=\{\gamma\in\Omega:
   \gamma_j=\alpha_j\ (j<n),\ \gamma_n\ne\alpha_n\}.
\]
These sets are pairwise disjoint and partition
$\Omega\setminus\{\alpha\}$. Given $\ell^{(n)}\in f_{\alpha,n}L$,
define $\ell$ by using the coordinates of $\ell^{(n)}$ on
$D_{\alpha,n}$ and by setting its $\alpha$-coordinate equal to zero.
Its support is contained in the countable union
$\bigcup_n\supp\ell^{(n)}$, so $\ell\in L$.
This proves that \eqref{eq:patching-map} is surjective. Higher Ext
groups vanish because $\pd_S T_\alpha=1$.

The module $R=S\oplus E$ is flat over $S$. Inducing a projective
$S$-resolution of $T_\alpha$ therefore gives a projective
$R$-resolution of $A_\alpha$. Adjunction yields
\[
 \Ext_R^q(A_\alpha,W)\cong\Ext_S^q(T_\alpha,W)=0\qquad(q\geq1). \qedhere
\] 
\end{proof}

\begin{lemma}\label{lem:W-upper}
For every $\alpha\in\Omega$ and $q\geq1$,
$\Ext_R^q(T_\alpha,W)=0$. In particular, $\id_R W\leq2$.
\end{lemma}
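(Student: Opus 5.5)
The plan is to feed the fiberwise vanishing $\Ext_R^q(A_\alpha,W)=0$ ($q\geq1$) of Lemma~\ref{lem:patching} into the long exact sequence of the self-extension \eqref{eq:self-extension}. Once $\Ext_R^q(T_\alpha,W)=0$ is known for all $\alpha$ and all $q\geq1$, the bound $\id_R W\leq2$ follows at once from Lemma~\ref{lem:injective-test}.

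Applying $\Hom_R(-,W)$ to $0\to T_\alpha\xrightarrow{j_\alpha}A_\alpha\xrightarrow{p_\alpha}T_\alpha\to0$ gives, for $q\geq1$,
\[
 0=\Ext_R^q(A_\alpha,W)\to\Ext_R^q(T_\alpha,W)\xrightarrow{\delta}
 \Ext_R^{q+1}(T_\alpha,W)\to\Ext_R^{q+1}(A_\alpha,W)=0 .
\]
So the connecting maps are isomorphisms $\Ext_R^q(T_\alpha,W)\cong\Ext_R^{q+1}(T_\alpha,W)$ for every $q\geq1$. In low degrees the sequence reads
\[
 \Hom_R(A_\alpha,W)\xrightarrow{j_\alpha^*}\Hom_R(T_\alpha,W)\to
 \Ext_R^1(T_\alpha,W)\to\Ext_R^1(A_\alpha,W)=0 .
\]
Hence $\Ext_R^1(T_\alpha,W)=\coker j_\alpha^*$. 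Thus everything reduces to showing that $j_\alpha^*$ is surjective: every $R$-map $T_\alpha\to W$ must extend along $j_\alpha$ to $A_\alpha$. This is the main step; unlike the situation in Proposition~\ref{prop:fid-upper}, the factorization \eqref{eq:j-factorization} alone does not suffice here.

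For this I compute both Hom groups explicitly as annihilators, using that $T_\alpha=R/(I_\alpha\oplus E)$ and $A_\alpha=R/RI_\alpha$ with $RI_\alpha=I_\alpha\oplus I_\alpha E$.

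First, $\Hom_R(T_\alpha,W)$ is the set of $w=(\ell,p)\in W$ killed by $I_\alpha$ and by $E$. Acting by $u_\gamma\in E$ sends $(\ell,p)$ to $(0,u_\gamma\ell)$, and this picks out the $\gamma$-coordinate of $\ell$. So $E$-annihilation forces $\ell=0$. The elements $f_{\alpha,n}$ act as indicators of the sets $D_{\alpha,n}$, which partition $\Omega\setminus\{\alpha\}$. So $I_\alpha$-annihilation forces $p$ to be supported in $\{\alpha\}$. Hence these maps are exactly $1\mapsto(0,p)$ with $p=c\,\delta_\alpha$ for some $c\in C$.

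Second, $\Hom_R(A_\alpha,W)$ consists of the $w=(\ell,p)$ killed by $I_\alpha$ and by $I_\alpha E$. The second condition follows from the first, so the maps are $1\mapsto(\ell,p)$ with both $\ell$ and $p$ supported at $\alpha$.

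Now $j_\alpha(1)=\eps=q_\alpha(0,u_\alpha)$. So if $g\in\Hom_R(A_\alpha,W)$ corresponds to $(\ell,p)$, then $j_\alpha^*g$ sends $1$ to $u_\alpha(\ell,p)=(0,\ell_\alpha\delta_\alpha)$. Given a target $(0,c\,\delta_\alpha)$, I take $\ell=c\,\delta_\alpha$ and $p=0$. This shows $j_\alpha^*$ is surjective, so $\Ext_R^1(T_\alpha,W)=0$.

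The isomorphisms $\delta$ then give $\Ext_R^q(T_\alpha,W)=0$ for all $q\geq1$ by induction on $q$. Lemma~\ref{lem:injective-test} yields $\id_R W\leq2$.

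The only delicate point is the bookkeeping of the $E$-action on $W$, namely that $u_\gamma$ acts by extracting the $\gamma$-coordinate of the first component. I would verify this directly from \eqref{eq:W}.
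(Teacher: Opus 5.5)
Your proposal is correct and follows essentially the same route as the paper's proof. Both arguments identify $\Hom_R(A_\alpha,W)$ and $\Hom_R(T_\alpha,W)$ as annihilators supported at the single coordinate $\alpha$, and both use $j_\alpha(1)=q_\alpha(0,u_\alpha)$ to see that restriction along $j_\alpha$ is $(\ell,p)\mapsto\ell$, which is surjective; they then combine Lemma~\ref{lem:patching} with the long exact sequence of \eqref{eq:self-extension}, induct on $q$, and finish with Lemma~\ref{lem:injective-test}.
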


\begin{proof}
Evaluation at the generator identifies
\[
 \Hom_R(A_\alpha,W)=\Ann_W(I_\alpha).
\]
The annihilator on the right consists exactly of pairs whose two
components are supported at the single coordinate $\alpha$. To see
this, each other coordinate $\gamma$ is acted on as one by some
$x_n-\alpha_n$. Thus this Hom group is identified with $C\oplus C$.
Also $\Ann_W(E)=0\oplus L$, because multiplication by $u_\gamma$
detects the $\gamma$-coordinate of the first component. Consequently
$\Hom_R(T_\alpha,W)$ is the copy of $C$ in the second component
supported at $\alpha$.

Under these identifications, restriction along $j_\alpha$ is
\begin{equation}\label{eq:restriction-surjective}
 \Hom_R(A_\alpha,W)\longrightarrow\Hom_R(T_\alpha,W),
       \qquad(\ell,p)\longmapsto\ell.
\end{equation}
Indeed $j_\alpha(1)=q_\alpha(0,u_\alpha)$, so a map taking $1$ to
$(\ell,p)$ takes $j_\alpha(1)$ to
$u_\alpha(\ell,p)=(0,\ell)$. Single-coordinate elements belong to
$L$, and therefore \eqref{eq:restriction-surjective} is surjective.
The long exact sequence for \eqref{eq:self-extension}, together
with Lemma~\ref{lem:patching}, gives $\Ext_R^1(T_\alpha,W)=0$ and
\[
 \Ext_R^q(T_\alpha,W)\cong\Ext_R^{q+1}(T_\alpha,W)
       \qquad(q\geq1).
\]
Induction proves all the asserted vanishings.
Lemma~\ref{lem:injective-test} gives the injective-dimension bound.
\end{proof}

\subsection{The quotient that detects the lower bound}

We next compute the first Ext group of $S$ with coefficients in $W$.
All identifications in the following calculation are $S$-linear.
Since $E$ is annihilated by itself, a map $E\to W$ has image in
$0\oplus L$. The single-coordinate calculation in the preceding
proof gives
\[
 \Hom_R(E,W)\cong\prod_{\alpha\in\Omega}\Hom_R(T_\alpha,W)
                \cong P.
\]
There is no support restriction on this product: any family
$(c_\alpha)\in P$ defines a map on the direct sum $E$ by
coordinatewise multiplication, and its value on each element of $E$
has finite support. Under $\Hom_R(R,W)=W$, restriction to $E$ is
$(\ell,p)\mapsto\ell$. Its image is $L\subseteq P$. Thus
$0\to E\to R\to S\to0$ gives
\begin{equation}\label{eq:Q}
 \Ext_R^1(S,W)\cong Q:=P/L.
\end{equation}
Lemma~\ref{lem:W-upper} also gives
\begin{equation}\label{eq:SW-vanish}
 \Ext_R^q(S,W)=0\qquad(q\geq2).
\end{equation}

Define pairwise orthogonal nonzero idempotents in $C\subseteq S$ by
\begin{equation}\label{eq:y-idempotents}
 e_n=(1-y_n)\prod_{j<n}y_j\qquad(n\geq1),
 \qquad J=\bigoplus_{n\geq1}Se_n.
\end{equation}
The same orthogonalization as in Lemma~\ref{lem:hereditary} shows that
$J=(1-y_n:n\geq1)S$. Evaluation at $y_n=1$ for all $n$ identifies
$S/J$ with the ring $B$. In what follows, $B$ carries this quotient
$S$-module structure, inflated to $R$.

\begin{proposition}\label{prop:nonzero-ext}
There is a natural isomorphism
\begin{equation}\label{eq:second-ext-cokernel}
 \Ext_R^2(B,W)\cong
 \coker\left[Q\xrightarrow{\phi}\prod_{n\geq1}e_nQ\right],
 \qquad\phi(q)=(e_nq)_{n\geq1}.
\end{equation}
The map $\phi$ is not surjective. Consequently $\id_R W=2$.
\end{proposition}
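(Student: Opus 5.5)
We first derive the displayed isomorphism. The plan is to compute $\Ext_R^2(B,W)$ from the short exact sequence of $R$-modules $0\to J\to S\to B\to0$. The relevant terms of the long exact Ext sequence are
\[
 \Ext_R^1(S,W)\longrightarrow\Ext_R^1(J,W)\longrightarrow\Ext_R^2(B,W)\longrightarrow\Ext_R^2(S,W).
\]
By \eqref{eq:SW-vanish} the last term vanishes, so $\Ext_R^2(B,W)$ is the cokernel of restriction $\Ext_R^1(S,W)\to\Ext_R^1(J,W)$.

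Next we describe $\Ext_R^1(J,W)$. Since $J=\bigoplus_n Se_n$, and Ext converts direct sums in the first variable into products, $\Ext_R^1(J,W)\cong\prod_n\Ext_R^1(Se_n,W)$. Each $Se_n$ is an $R$-direct summand of $S$ with complement $S(1-e_n)$, via the idempotent $(e_n,0)\in R$. Hence $\Ext_R^1(Se_n,W)\cong e_n\Ext_R^1(S,W)\cong e_nQ$ by \eqref{eq:Q}, and this identification is $S$-linear. Naturality of the summand decomposition shows that restriction along $Se_n\into S$ is multiplication by $e_n$, so the restriction map becomes $\phi$. This gives \eqref{eq:second-ext-cokernel}.

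The main step is non-surjectivity of $\phi$. Here $e_nQ=e_nP/e_nL$, where $e_n$ acts coordinatewise through $\rho_\alpha(e_n)=e_n\in C$, independently of $\alpha$. The idea is to exploit the uncountability of $\Omega$, as foreshadowed in the introduction. Partition $\Omega$ into countably many uncountable sets $\Omega_n$, for instance by a bijection $\Omega\cong\Omega\times\NN$ or by coordinates. Define $t^{(n)}\in e_nP$ by putting $e_n$ at the coordinates $\alpha\in\Omega_n$ and $0$ elsewhere, and consider the family $(\overline{t^{(n)}})_n\in\prod_n e_nQ$.

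Suppose it equals $\phi(\bar p)$ for some $p\in P$. Then $e_np-t^{(n)}\in L$ for every $n$, so each difference has countable support. The union $N$ of these countably many supports is countable. For $\alpha\notin N$ we get $e_np_\alpha=t^{(n)}_\alpha$ for all $n$. Choose, for each $n$, some $\alpha_n\in\Omega_n\setminus N$; this is possible because $\Omega_n$ is uncountable. This shows only that $p_{\alpha_n}e_n=e_n$, which is not yet contradictory, because one coordinate $p_\alpha\in C$ may satisfy several such conditions. The conflict must instead come from a single coordinate. Pick $\alpha\notin N$ with $\alpha\in\Omega_1$. Then $e_mp_\alpha=t^{(m)}_\alpha=0$ for $m\ne1$ and $e_1p_\alpha=e_1$. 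This is consistent, since $p_\alpha=e_1$ works, so a cleverer family is needed.

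I would instead require, at coordinates of $\Omega_m$, that $e_np_\alpha=e_n$ for all $n\le m$, so that $p_\alpha$ must dominate $e_1+\dots+e_m$. Any single $p_\alpha$ can satisfy this, so uncountability alone does not force a contradiction coordinatewise. The real obstruction must come from $L$ being countable-support while $P$ has none. The remedy is to make each $t^{(n)}$ itself have the property that all its \emph{co}-countable modifications remain incompatible across $n$, using the fact that $\prod_n e_nQ$ involves infinitely many $n$ and that an element of $C$ involves only finitely many $y_j$. Concretely, I would set $t^{(n)}_\alpha=e_n$ for all $\alpha\in\Omega$. A lift $p$ would then satisfy $e_np_\alpha=e_n$ for all $n$ and all $\alpha$ outside the countable set $N$. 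Since $p_\alpha\in C$ depends on finitely many $y_j$, evaluation at a suitable binary point shows that $p_\alpha\in J'$ cannot hold for every $e_n$ unless $p_\alpha=1$, which is harmless. So the family must be made to vary over $n$ in an infinite, non-finitely-describable way per coordinate. The choice that should work is: for $\alpha\in\Omega_m$, put $t^{(n)}_\alpha=e_n$ if $n\ge m$ and $0$ if $n<m$, with the sets $\Omega_m$ chosen so that every $\Omega_m$ is uncountable and also each fixed $n$-component has the needed form. Then a lift would require $p_\alpha$ with $e_np_\alpha=e_n$ for all $n\ge m$ and $e_np_\alpha=0$ for $n<m$. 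This is satisfiable by $1-(e_1+\dots+e_{m-1})$, so once again no contradiction arises.

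Verifying a correct witness family is the main obstacle. I would look for one exploiting that an element of $C$ is determined by finitely many $y_j$ while the conditions $e_np_\alpha=\epsilon_n e_n$, for an arbitrary $0/1$ sequence $(\epsilon_n)$ indexed by $\alpha$, demand infinitely many independent bits. Take $\Omega_\epsilon$-type coordinates where $(\epsilon_n)$ is non-eventually-constant, for instance $\epsilon=\alpha$ itself with $t^{(n)}_\alpha=\alpha_ne_n$. Then $p_\alpha e_n=\alpha_ne_n$ for all $n$ and all $\alpha\notin N$. Since $p_\alpha$ involves finitely many $y_j$, the sequence $(\alpha_n)$ would be eventually constant, which fails for all but countably many $\alpha$. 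This is the contradiction, so $\phi$ is not surjective. Combined with Lemma~\ref{lem:W-upper}, it gives $\id_RW=2$.
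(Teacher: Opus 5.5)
Your proposal is correct and follows the paper's proof. You use the same long exact sequence for $0\to J\to S\to B\to0$ with $\Ext_R^2(S,W)=0$, the same splitting $\Ext_R^1(Se_n,W)\cong e_nQ$ via the idempotent $e_n$, and the same obstruction: a countable union of countable supports misses some $\alpha$, and an element $c\in C$ involving finitely many $y_j$ satisfies $e_nc=c(1,1,\dots)e_n$ for large $n$ because $y_je_n=e_n$ for $j<n$. The only difference is your witness $t^{(n)}_\alpha=\alpha_ne_n$, which makes you also discard the countably many eventually constant $\alpha$, whereas the paper uses the $\alpha$-independent family $(a_ne_n)_\alpha$ with a fixed alternating sequence $(a_n)$. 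In a final write-up, delete the abandoned attempts and state explicitly the step $e_np_\alpha=p_\alpha(1,1,\dots)e_n$ together with $e_n\neq0$.
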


\begin{proof}
Apply $\Hom_R(-,W)$ to $0\to J\to S\to B\to0$. By
\eqref{eq:SW-vanish},
\[
 \Ext_R^2(B,W)\cong
 \coker\bigl[\Ext_R^1(S,W)\longrightarrow\Ext_R^1(J,W)\bigr].
\]
For each $n$, let $i_n:Se_n\into S$ be inclusion and let
$\pi_n:S\onto Se_n$ be multiplication by $e_n$. Then
$\pi_ni_n=1$ and $i_n\pi_n$ is multiplication by $e_n$.
Contravariance of Ext gives $i_n^*\pi_n^*=1$ on
$\Ext_R^1(Se_n,W)$, whereas $\pi_n^*i_n^*=e_n$ on
$\Ext_R^1(S,W)$.
In a commutative ring, this last action agrees with scalar
multiplication on Ext. Hence $\pi_n^*$ identifies
$\Ext_R^1(Se_n,W)$ with $e_nQ$, and $i_n^*$ becomes
$q\mapsto e_nq$. Taking the product over $n$ proves
\eqref{eq:second-ext-cokernel}.

Choose an alternating sequence $a_n\in k$, for instance $a_n=0$
for even $n$ and $a_n=1$ for odd $n$. Let $t_n\in e_nQ$ be the
class of the constant coordinate family
$(a_ne_n)_{\alpha\in\Omega}\in P$, and put
$t=(t_n)_{n\geq1}$. Suppose that $t=\phi(z+L)$ for some $z\in P$.
For each $n$, the equality $e_n(z+L)=t_n$ means that
\[
 U_n=\{\alpha\in\Omega:e_nz_\alpha\ne a_ne_n\}
\]
is at most countable. The union $\bigcup_nU_n$ is countable, whereas
$\Omega$ is uncountable. Choose $\alpha$ outside this union. Then
\begin{equation}\label{eq:incompatible-values}
 e_nz_\alpha=a_ne_n\qquad\text{for every }n\geq1.
\end{equation}
The element $z_\alpha\in C$ depends on only finitely many variables,
say $y_1,\ldots,y_N$. If $n>N$, multiplication by $e_n$ sets these
variables equal to one, since $y_je_n=e_n$ for $j<n$. Thus
\[
 e_nz_\alpha=\lambda e_n\qquad(n>N),\qquad
 \lambda=z_\alpha(1,1,\ldots)\in k.
\]
Since $e_n\ne0$, equation \eqref{eq:incompatible-values} forces
$a_n=\lambda$ for every $n>N$, contradicting the choice of $a_n$.
Therefore $\phi$ is not surjective, and $\Ext_R^2(B,W)\ne0$.
Lemma~\ref{lem:W-upper} now gives $\id_R W=2$.
\end{proof}

\begin{proof}[Proof of Theorem~\ref{thm:main}]
Corollary~\ref{cor:fpd} gives $\FPD(R)=1$.
Proposition~\ref{prop:fid-upper} gives $\FID(R)\leq2$, and
Proposition~\ref{prop:nonzero-ext} supplies a module of injective
dimension two. The structural assertions follow from
Lemma~\ref{lem:local} and Proposition~\ref{prop:structure}.
Finally $|L|=\cc$: singleton supports give the lower bound, while
the number of countable supports and the number of countable
coordinate assignments are each at most
$\cc^{\aleph_0}=\cc$. Therefore $|W|=|L\oplus L|=\cc$.
\end{proof}

\begin{remark}\label{rem:noninjective-Q}
The same cokernel computes $\Ext_S^1(B,Q)$, by applying
$\Hom_S(-,Q)$ to $0\to J\to S\to B\to0$. Thus
\[
 \Ext_R^2(B,W)\cong\Ext_S^1(B,P/L)\ne0.
\]
In particular, $P/L$ is not injective over the hereditary ring $S$.
This identifies the precise obstruction responsible for the extra
injective dimension over $R$.
\end{remark}

\begin{remark}\label{rem:larger-witness}
The module $L\oplus P$ with the action
$(s,e)(\ell,p)=(s\ell,sp+e\ell)$ also has injective dimension two.
The patching argument for $P$ has no support restriction, and all
subsequent calculations are unchanged. This larger module has
cardinality $2^{\cc}$, whereas the witness $L\oplus L$ above has
cardinality $\cc$. The full product $P$ still occurs as
$\Hom_R(E,W)$ for the smaller witness, because the source $E$ is a
direct sum.
\end{remark}

\paragraph*{Acknowledgements.} The author used ChatGPT to assist in developing the counterexample presented in this paper.

\end{document}